\theoremstyle{plain}
\newtheorem{theorem}{Theorem}
\newtheorem{lemma}{Lemma}
\author{O.Bezushchak}
\title{Derivations of rings of infinite matrices}
\begin{document}

\maketitle

\begin{abstract} We describe derivations of several important associative and Lie rings of infinite matrices over general rings of coefficients.
\end{abstract}

\keywords{\emph{Key words and phrases.}  Ring of infinite matrices; derivation.}               

\subjclass{2020 \emph{Mathematics Subject Classification.}  15B30, 16W25.}

\section*{Introduction}

Let $R$ be an associative ring with unit $1_R,$ and let $I$ be an infinite set.

Consider the ring  $M(I,R)$ of $(I\times I)$-matrices over the ring  $R$ having finitely many nonzero entries in each column. The ring $M(I,R)$ is isomorphic to the ring of endomorphisms of a free $R^{op}$-module of  rank  $\text{card}(I).$ Here,  $R^{op}$ is a ring that is anti-isomorphic to $R;$ and $\text{card}(I)$ is the cardinality  of the set $I.$

Consider also the subring  $M_{\infty}(I,R)<M(I,R)$ of all $(I\times I)$-matrices over  $R$ having finitely many nonzero entries, and the subring $M_{rcf}(I,R)<M(I,R)$ of all  $(I\times I)$-matrices over  $R$ having finitely many nonzero entries in each row  and in each column.

Recall that an additive mapping $d:R \to R $ is called a \textit{derivation} if $$ d(ab)= d(a) b + a d(b) \quad \text{for arbitrary elements} \quad  a, \ b \in R . $$ Let $V$ be a bi-module over a ring $R.$ An additive mapping $d:R \to V$ is called a \textit{derivation} or a $1$-\textit{cocycle} if $ d(ab)= d(a) b + a d(b) $ for arbitrary elements $ a,  b \in R .$  For an element $v\in V$ the mapping $d_v : R\to V,$ $d_v(a)=av-va$ is a derivation.

The purpose of this paper is to  determine derivations of the rings $M_{\infty}(I,R),$ $M_{rcf}(I,R),$  $M(I,R).$ Recall that all derivations of a ring form a Lie ring; see \cite{6Jacobson}.

Every derivation of the ring $R$  gives rise to a derivation of the ring $M(I,R)$ that leaves $M_{rcf}(I,R)$  and $M_{\infty}(I,R)$ invariant. Hence, the Lie ring $\text{Der}(R)$ lies in each of the Lie rings $\text{Der}(M(I,R)),$ $\text{Der}(M_{rcf}(I,R)),$ $\text{Der}(M_{\infty}(I,R)).$ 

For an element $a\in M(I,R),$ let $$\text{ad}(a):x\mapsto [a,x]=ax -xa $$ denote the \textit{inner derivation}. Since $M_{\infty}(I,R)$ is a two-sided ideal in $M_{rcf}(I,R),$ it follows that $M_{\infty}(I,R)$ is invariant under any inner derivation $\text{ad}(a),$ $a\in M_{rcf}(I,R).$

\begin{theorem}\label{Th1} \
	\begin{enumerate}
		\item[\text{(a)}]  An arbitrary derivation $d$ of the ring $M_{\infty}(I, R)$ is of the type  $$d=\emph{ad} (a) + u, \quad \text{where} \quad a\in M_{rcf}(I,R), \quad u \in \text{\emph{Der}}(R);$$
		\item[\text{(b)}]  an arbitrary derivation $d$ of the ring $M(I,R)$ $($resp. $M_{rcf}(I,R))$ is of the type $$d=\emph{ad} (a) + u, \quad \text{where} \quad a\in M(I,R) \quad  (\text{resp.} \ a\in M_{rcf}(I,R)), \quad u \in  \text{\emph{Der}}(R).$$ 
	\end{enumerate}		
\end{theorem}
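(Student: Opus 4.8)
The plan is to treat the three rings uniformly: peel off an inner derivation determined by the action of $d$ on the matrix units $e_{ij}$, and then identify the remaining derivation with (the entrywise extension of) a derivation of $R$. Two elementary observations drive everything. First, applying $d$ to $e_{ij}=e_{ii}e_{ij}=e_{ij}e_{jj}$ and to $e_{ii}=e_{ii}^{2}$ shows that $d(e_{ij})$ is supported on the union of the $i$-th row and the $j$-th column, and that $(d(e_{ii}))_{ii}=0$. Second, fix once and for all an index $i_0\in I$; if $g_r$ and $g_c$ denote the row-$i_0$ part and the column-$i_0$ part of $d(e_{i_0i_0})$, then the matrix $a_0:=g_c-g_r$ satisfies $[a_0,e_{i_0i_0}]=d(e_{i_0i_0})$, and $a_0$ lies in $M_{rcf}(I,R)$ when $d$ acts on $M_\infty(I,R)$ or $M_{rcf}(I,R)$, and in $M(I,R)$ when $d$ acts on $M(I,R)$. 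Replacing $d$ by $d-\mathrm{ad}(a_0)$, I may and will assume $d(e_{i_0i_0})=0$.

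I expect the construction of the inner part to be the main obstacle, since in the first two cases one must produce a matrix $a$ that is simultaneously row- and column-finite. Granting $d(e_{i_0i_0})=0$, the matrices $d(e_{i_0j})$ lie in row $i_0$ and the matrices $d(e_{ji_0})$ lie in column $i_0$; applying $d$ to $e_{i_0j}e_{ki_0}=\delta_{jk}e_{i_0i_0}$ then yields the key symmetry $(d(e_{ji_0}))_{pi_0}=-(d(e_{i_0p}))_{i_0j}$ for all $p,j\in I$. I define $a$ to be the matrix whose $(p,j)$-entry is $a_{pj}:=(d(e_{ji_0}))_{pi_0}$. Then the $j$-th column of $a$ is exactly the $i_0$-th column of the matrix $d(e_{ji_0})$, so $a$ is column-finite ($a\in M(I,R)$ in every case); and, by the symmetry, the $p$-th row of $a$ is the negative of the $i_0$-th row of $d(e_{i_0p})$, so $a$ is also row-finite when $d$ takes values in $M_\infty(I,R)$ or $M_{rcf}(I,R)$, i.e.\ $a\in M_{rcf}(I,R)$ in those cases. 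A short computation with $e_{kl}=e_{ki_0}e_{i_0l}$, using the symmetry once more, gives $[a,e_{kl}]=d(e_{kl})$ for all $k,l$. The point is precisely that the ``row data'' $d(e_{i_0p})$ and the ``column data'' $d(e_{pi_0})$ of $d$ encode one and the same matrix $a$ up to sign; it is this coincidence that forces $a$ to be doubly finite.

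Put $d':=d-\mathrm{ad}(a)$; this is a derivation of the ring under consideration, because $\mathrm{ad}(a)$ leaves $M_\infty(I,R)$ invariant when $a\in M_{rcf}(I,R)$ and is inner in the other two rings. By construction $d'(e_{ij})=0$ for all $i,j$. From $d'(e_{i_0i_0})=0$ one deduces $d'(re_{i_0i_0})=e_{i_0i_0}\,d'(re_{i_0i_0})\,e_{i_0i_0}\in Re_{i_0i_0}$ for every $r\in R$, so that $d'(re_{i_0i_0})=u(r)e_{i_0i_0}$ defines a derivation $u$ of $R$; writing $re_{ij}=e_{ii_0}(re_{i_0i_0})e_{i_0j}$ gives $d'(re_{ij})=u(r)e_{ij}$ for all $i,j$ and all $r\in R$. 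Since the elements $re_{ij}$ span $M_\infty(I,R)$ additively, $d'$ agrees on $M_\infty(I,R)$ with the derivation of the matrix ring induced by $u$; in the case of $M_\infty(I,R)$ this already gives $d=\mathrm{ad}(a_0+a)+u$ with $a_0+a\in M_{rcf}(I,R)$, which is (a). For $M(I,R)$ and $M_{rcf}(I,R)$ I finish with a general observation: a derivation $\widehat d$ of either ring that vanishes on the left ideal $M_\infty(I,R)$ is identically zero, because for $x$ in the ring and $y\in M_\infty(I,R)$ one has $xy\in M_\infty(I,R)$, hence $0=\widehat d(xy)=\widehat d(x)\,y$, and taking $y=e_{ij}$ kills the $i$-th column of $\widehat d(x)$ for every $i$. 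Applying this to $\widehat d:=d'-u$ (which vanishes on all $re_{ij}$, hence on $M_\infty(I,R)$) yields $d'=u$, whence $d=\mathrm{ad}(a_0+a)+u$ with $a_0+a\in M(I,R)$ (resp.\ in $M_{rcf}(I,R)$), as claimed.
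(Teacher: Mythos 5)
Your argument is correct, and although it shares the paper's overall strategy---subtract an inner derivation determined by the images of matrix units, then identify the residue with the entrywise extension of a derivation of $R$---the construction of the inner part is genuinely different. The paper normalizes $d$ at \emph{all} diagonal idempotents simultaneously: Lemma~2 sets $v_{ij}=\bigl(d(e_{jj}(1_R))\bigr)_{ij}$ for $i\neq j$, which is visibly column-finite, but its row-finiteness is recovered only a posteriori via Lemma~3(a), from the fact that $d-u=\mathrm{ad}(v)$ must carry $M_{\infty}(I,R)$ into $M_{rcf}(I,R)$. You instead normalize at a single idempotent $e_{i_0i_0}$ and read the entire matrix $a$ off the column data $d(e_{ji_0})$; the relation $(d(e_{ji_0}))_{pi_0}=-(d(e_{i_0p}))_{i_0j}$ shows that the same matrix is encoded (up to sign) by the row data, so both finiteness conditions drop out of the definition and the paper's Lemma~3 becomes unnecessary. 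What your route buys is a self-contained treatment of the cases $M(I,R)$ and $M_{rcf}(I,R)$, which the paper compresses into ``arguing as above'': your observation that a derivation of either ring vanishing on the left ideal $M_{\infty}(I,R)$ is identically zero (take $y=e_{ij}$ to kill each column of $\widehat{d}(x)$) is precisely the step needed to pass from $d'=u$ on $M_{\infty}(I,R)$ to the whole ring. What the paper's route buys is that a single lemma (Lemma~2, about derivations of $M_{\infty}(I,\mathbb{Z}\cdot 1_R)$ into the bimodule $M(I,R)$) serves all three rings at once. If you write yours up, do record the two small facts your verification of $[a,e_{kl}]=d(e_{kl})$ silently uses: the row and column of $a$ indexed by $i_0$ vanish (from $d(e_{i_0i_0})=0$ and the symmetry), and the $(k,l)$-entry match requires the symmetry in the diagonal case $p=j$; both check out.
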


In \cite{1Bezushchak}, we proved Theorem~\ref{Th1} in the case when $R$ is a field and the derivation $d$ is linear. W.~Ho{\l}ubowski and S.~\.Zurek  proved Theorem \ref{Th1} under the assumptions that $(i)$ the set $I$ is countable, $(ii)$ the ring $R$ is commutative, and $(iii)$ the derivation $d$ is $R$-linear; see \cite{5Hol_Zurek}.

An arbitrary associative ring $R$ gives rise to the Lie ring $$R^{(-)}=(R, \ [a,b]=ab-ba).$$

Using the proof of Herstein's Conjectures by K.I.~Beidar, M.~Bre\v{s}ar, M.A.~Chebotar and  W.S.~Martindale (see \cite{2Bei_Bre_Cheb_Mart_1, 3Bei_Bre_Cheb_Mart_2, 4Bei_Bre_Cheb_Mart_3}) and Theorem~\ref{Th1},  we obtain descriptions of derivations of Lie rings  $$\mathfrak{gl}_{rcf}(I,R)=M_{rcf}(I,R)^{(-)}, \quad \mathfrak{gl}(I,R) =M(I,R)^{(-)}, \quad \mathfrak{sl}_{\infty}(I,R)=[\mathfrak{gl}_{\infty}(I, R),\mathfrak{gl}_{\infty}(I, R)]$$ under the assumption that $\frac{1}{2}\in R.$

\begin{theorem}\label{Th2} \
	\begin{enumerate}
		\item[\text{(a)}]  An arbitrary derivation $d$ of the Lie ring $\mathfrak{sl}_{\infty}(I,R)$ is of the type $$d=\emph{ad} (a) + u, \quad \text{where} \quad a\in \mathfrak{gl}_{rcf}(I,R), \quad u \in \text{\emph{Der}}(R);$$
		
	\item[\text{(b)}]  an arbitrary derivation $d$ of the Lie ring $\mathfrak{gl}(I,R)$ $($resp. $\mathfrak{gl}_{rcf}(I,R))$ is of the type $$d=\emph{ad} (a) + u, \quad \text{where} \quad a\in \mathfrak{gl}(I,R) \quad (\text{resp.} \ a\in  \mathfrak{gl}_{rcf}(I,R)), \quad u \in  \text{\emph{Der}}(R).$$ 
	\end{enumerate}		
\end{theorem}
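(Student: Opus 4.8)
The plan is to deduce Theorem~\ref{Th2} from Theorem~\ref{Th1} by reducing the study of Lie derivations to that of associative derivations, following the strategy made possible by the solution of Herstein's conjectures. The key point is that for each of the three Lie rings in question there is an associative ring $A$ (namely $M_{\infty}(I,R)$, $M_{rcf}(I,R)$, or $M(I,R)$) such that the Lie ring sits inside $A^{(-)}$, is large enough to contain a plentiful supply of idempotents (the matrix units $e_{ii}$ and $e_{ij}+e_{ji}$, $e_{ij}-e_{ji}$, which live in $\mathfrak{sl}_{\infty}$ once $\tfrac12\in R$), and generates $A$ as an associative ring. First I would record that $A$ is a prime ring (indeed $M_{rcf}(I,R)$ and $M(I,R)$ have no nontrivial two-sided ideals annihilating each other, and $M_\infty(I,R)$ is a prime ring with nonzero socle), so that the Beidar--Bre\v{s}ar--Chebotar--Martindale machinery for functional identities and Lie derivations of prime rings applies.

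The main step is the following dichotomy, which is exactly what \cite{2Bei_Bre_Cheb_Mart_1, 3Bei_Bre_Cheb_Mart_2, 4Bei_Bre_Cheb_Mart_3} deliver for a Lie derivation $d$ of (a noncentral Lie ideal of) the Lie ring $A^{(-)}$ of a prime ring $A$ with $\tfrac12\in A$ and with a sufficiently rich idempotent structure: $d$ extends to a derivation $\delta$ of the associative ring $A$ (or of its symmetric Martindale quotient ring) modulo an additive map into the centre that kills commutators. Concretely, I would show that $d = \delta|_{\mathcal L} + \tau$, where $\mathcal L$ is the Lie ring under consideration, $\delta$ is an associative derivation of $A$, and $\tau:\mathcal L\to Z$ is additive with $\tau([\mathcal L,\mathcal L])=0$. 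For $\mathcal L=\mathfrak{sl}_\infty(I,R)=[\mathfrak{gl}_\infty,\mathfrak{gl}_\infty]$ one has $\mathcal L=[\mathcal L,\mathcal L]$, so the $\tau$-term disappears outright; for $\mathfrak{gl}_{rcf}(I,R)$ and $\mathfrak{gl}(I,R)$ one must separately analyse the central-valued additive map $\tau$ and show it can be absorbed — here the relevant centre is $Z(M(I,R))\cong Z(R)$ (scalar matrices over the centre of $R$), and $\tau$ being determined by its values on a generating set of the abelian group $\mathcal L/[\mathcal L,\mathcal L]$ one checks directly that $\tau$ has the form $x\mapsto \operatorname{tr-like}$ data which, since there is no trace available on these infinite matrix rings in general, forces $\tau$ to be expressible as $\operatorname{ad}$ of a central element plus a coefficient-ring contribution, i.e.\ can be folded into the two allowed terms.

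Once $d$ is replaced by an honest associative derivation $\delta$ of $A$ agreeing with $d$ on $\mathcal L$, I invoke Theorem~\ref{Th1}: $\delta=\operatorname{ad}(a)+u$ with $a$ in the prescribed matrix ring and $u\in\operatorname{Der}(R)$. Restricting back to $\mathcal L$ gives $d=\operatorname{ad}(a)|_{\mathcal L}+u|_{\mathcal L}$, which is exactly the asserted form, provided one checks that $\operatorname{ad}(a)$ and $u$ really preserve $\mathcal L$ — for $u$ this is the coordinatewise action and is immediate; for $\operatorname{ad}(a)$ with $a\in\mathfrak{gl}_{rcf}(I,R)$ it is clear since $M_\infty$ is an ideal in $M_{rcf}$, and for the other two rings $a$ lies in the ring itself. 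One subtlety to handle is the passage between $A$ and its Martindale symmetric ring of quotients $Q_s(A)$, in which the $\delta$ produced by the functional-identity argument may a priori live: I would argue that for $A=M_{rcf}(I,R)$ and $A=M(I,R)$ the ring is already ``closed enough'' (it contains its own multiplier algebra in the relevant sense), and for $A=M_\infty(I,R)$ the derivation of $Q_s$ restricts to a derivation of $M_\infty$, so no information is lost and Theorem~\ref{Th1}(a) still applies with $a\in M_{rcf}(I,R)$.

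The hard part will be the verification that the hypotheses of the Beidar--Bre\v{s}ar--Chebotar--Martindale theorems are genuinely met in this infinite-dimensional, general-coefficient-ring setting — in particular checking primeness and the nonvanishing/richness of idempotents and $2$-torsion-freeness needed to run functional identities over $M(I,R)$ and $M_{rcf}(I,R)$ (which are not simple and are much larger than their socles) — and the bookkeeping that shows the central residual map $\tau$ contributes nothing new for $\mathfrak{gl}_{rcf}$ and $\mathfrak{gl}$. The reduction to Theorem~\ref{Th1} itself is then formal.
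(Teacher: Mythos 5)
Your strategy is precisely the one the paper itself indicates: the source contains no written-out proof of Theorem~\ref{Th2}, only the remark that it follows from the Beidar--Bre\v{s}ar--Chebotar--Martindale resolution of Herstein's conjectures together with Theorem~\ref{Th1}, and your outline (lift the Lie derivation to an associative derivation modulo a centre-valued additive map annihilating commutators, note that the centre-valued part vanishes on $\mathfrak{sl}_{\infty}(I,R)=[\mathfrak{sl}_{\infty}(I,R),\mathfrak{sl}_{\infty}(I,R)]$ and can be absorbed in the other cases, then invoke Theorem~\ref{Th1} and Lemma~\ref{lemma3} to place $a$ in the right matrix ring) is that plan made explicit. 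One concrete flaw in your justification, however: the rings $M(I,R)$, $M_{rcf}(I,R)$ and $M_{\infty}(I,R)$ are \emph{not} prime for a general unital ring $R$ (take $R=R_1\times R_2$; then each of these matrix rings splits as a direct product), so the prime-ring formulations of the Herstein-conjecture theorems do not apply as you state. What one actually needs is the functional-identity/matrix-unit version of the Lie-derivation theorem, valid for rings containing a sufficiently rich system of orthogonal idempotents --- supplied here by the $e_{ii}(1_R)$, which make every finite corner a full matrix ring $M_n(R)$ with $n\geq 3$ --- under the sole torsion hypothesis $\frac{1}{2}\in R$; primeness is not required and should not be asserted. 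With that substitution your reduction coincides with the paper's intended argument.
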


For an element $a\in R$ and indices $ i,j\in I$ let $e_{ij}(a)$ denote the  $(I\times I)$-matrices  having the element $a$ at the intersection of the $i$-th row and the $j$-th column, all other entries are equal to zero.

The following lemma is straightforward.

\begin{lemma}\label{lemma1} Let $A=(a_{ij})_{i,j\in I} \in M(I,R),$ $a_{ij}\in R.$ Then $$ A=e_{kk}(1_R)A + Ae_{kk}(1_R)$$ if and only if $ a_{ij}=0 $ whenever $ i \neq k, $ $ j \neq k$ or $ i=j=k.$ \end{lemma}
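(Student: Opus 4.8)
The plan is to prove both implications by a direct entrywise computation, so essentially there is nothing to do beyond bookkeeping with Kronecker deltas. First I would record the effect of multiplying by the matrix unit $e_{kk}(1_R)$: since $\bigl(e_{kk}(1_R)\bigr)_{pq}=\delta_{pk}\delta_{qk}$ (Kronecker symbol), one gets $\bigl(e_{kk}(1_R)A\bigr)_{ij}=\delta_{ik}a_{kj}$, i.e. left multiplication keeps only the $k$-th row of $A$, and $\bigl(Ae_{kk}(1_R)\bigr)_{ij}=a_{ik}\delta_{jk}$, i.e. right multiplication keeps only the $k$-th column of $A$. Adding these, the $(i,j)$-entry of $e_{kk}(1_R)A+Ae_{kk}(1_R)$ equals $\delta_{ik}a_{kj}+a_{ik}\delta_{jk}$, and this is clearly well defined for every $A\in M(I,R)$.

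Next I would compare this expression with $a_{ij}$, splitting into cases according to which of $i,j$ equals $k$. If $i=k$ and $j\neq k$, or $i\neq k$ and $j=k$, the sum collapses to a single term equal to $a_{ij}$, so the matrix identity imposes no restriction on these entries. If $i=j=k$, the two summands coincide and the sum is $2a_{kk}$, so the identity $A=e_{kk}(1_R)A+Ae_{kk}(1_R)$ forces $2a_{kk}=a_{kk}$, that is $a_{kk}=0$. If $i\neq k$ and $j\neq k$, the sum is $0$, so the identity forces $a_{ij}=0$. This yields the ``only if'' direction, and reading the same computation backwards gives the ``if'' direction: granted $a_{kk}=0$ and $a_{ij}=0$ for all $i\neq k$, $j\neq k$, the displayed entrywise formula shows $\bigl(e_{kk}(1_R)A+Ae_{kk}(1_R)\bigr)_{ij}=a_{ij}$ in each of the cases above.

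The only subtlety worth flagging, and it is a very mild one, is the overlap at the $(k,k)$ position, where both $e_{kk}(1_R)A$ and $Ae_{kk}(1_R)$ contribute the entry $a_{kk}$; this is exactly why the diagonal entry $(k,k)$ is excluded from the admissible support in the statement. No assumption on $R$ (such as $\tfrac{1}{2}\in R$) is needed here, since $2a_{kk}=a_{kk}$ already implies $a_{kk}=0$ in an arbitrary ring. Hence I expect no real obstacle: the proof is a one-line matrix multiplication followed by a routine case check.
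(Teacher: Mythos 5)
Your computation is correct and is exactly the routine entrywise verification the paper has in mind when it declares the lemma ``straightforward'' and omits the proof. The case analysis, including the observation that $a_{kk}=2a_{kk}$ forces $a_{kk}=0$ over an arbitrary ring, is complete and handles the one point (the $(k,k)$ overlap) that explains the exclusion of $i=j=k$ in the statement.
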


Let $\mathbb{Z}$  be the ring of integers. Then $\mathbb{Z} \cdot 1_R$ is a subring of the ring $R.$ The ring $M(I,R)$ is a bimodule over the ring $M_{\infty} (I, \mathbb{Z}\cdot 1_R).$

\begin{lemma}\label{lemma2} For an arbitrary derivation $d:M_{\infty} (I, \mathbb{Z}\cdot 1_R) \to M (I, R)$ there exists a matrix $v\in M_{\infty}(I,R)$ such that $$ \big(d-d_v\big) \big(e_{kk}(1_R)\big)=0 $$ for an arbitrary index $k\in I.$ \end{lemma}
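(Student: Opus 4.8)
The plan is to determine, for each $k\in I$, the exact shape of the matrix $c_k:=d\big(e_{kk}(1_R)\big)\in M(I,R)$ using only the idempotency and orthogonality relations satisfied by the matrices $e_{kk}(1_R)$, and then to assemble the desired $v$ directly from the off-diagonal entries of the matrices $c_k$.

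First I would apply $d$ to the relation $e_{kk}(1_R)^2=e_{kk}(1_R)$, obtaining $c_k=c_k\,e_{kk}(1_R)+e_{kk}(1_R)\,c_k$; by Lemma~\ref{lemma1} this says precisely that $c_k$ vanishes outside the $k$-th row and the $k$-th column and has zero $(k,k)$-entry. Next, applying $d$ to $e_{kk}(1_R)\,e_{ll}(1_R)=0$ for $k\neq l$ gives $c_k\,e_{ll}(1_R)+e_{kk}(1_R)\,c_l=0$; since $c_k$ (resp. $c_l$) is supported on its $k$-th row and column (resp. $l$-th row and column), the only entry of either summand that can be nonzero is the one in position $(k,l)$, and it equals $(c_k)_{kl}$ (resp. $(c_l)_{kl}$). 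This yields the compatibility relations $(c_k)_{kl}=-(c_l)_{kl}$ for all $k\neq l$. I would then define $v=(v_{ij})_{i,j\in I}$ by $v_{ij}=(c_i)_{ij}$ for $i\neq j$ and $v_{ii}=0$, and check entrywise that $d_v\big(e_{kk}(1_R)\big)=e_{kk}(1_R)\,v-v\,e_{kk}(1_R)$ has $(k,j)$-entry $v_{kj}=(c_k)_{kj}$ for $j\neq k$, has $(i,k)$-entry $-v_{ik}=-(c_i)_{ik}=(c_k)_{ik}$ for $i\neq k$ (here the compatibility relation is used), has $(k,k)$-entry $0=(c_k)_{kk}$, and is zero in all remaining positions; in other words $d_v\big(e_{kk}(1_R)\big)=c_k=d\big(e_{kk}(1_R)\big)$, so $\big(d-d_v\big)\big(e_{kk}(1_R)\big)=0$ for every $k$.

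The one point that needs genuine care is that $v$ belongs to the required matrix ring. For a fixed column index $j$, the off-diagonal entries $v_{ij}=(c_i)_{ij}=-(c_j)_{ij}$ ($i\neq j$) are, up to sign, exactly the off-diagonal entries of the $j$-th column of $c_j$; since $c_j\in M(I,R)$ has only finitely many nonzero entries in that column, the matrix $v$ has finitely many nonzero entries in each column, and therefore $v\in M(I,R)$ (and if one starts from a derivation with values in $M_{\infty}(I,R)$, then each row of $c_j$ is also finite and the same construction places $v$ in $M_{rcf}(I,R)$). Thus the main obstacle is the support bookkeeping: squeezing out of $e_{kk}(1_R)^2=e_{kk}(1_R)$ and $e_{kk}(1_R)e_{ll}(1_R)=0$ enough information both to fix the row-and-column cross shape of each $c_k$ and to glue the $c_k$ together compatibly, and then correctly reading off which ring of matrices the assembled $v$ lies in; everything else reduces to the routine entrywise computations indicated above.
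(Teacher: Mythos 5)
Your argument is correct and follows essentially the same route as the paper's own proof: idempotency plus Lemma~\ref{lemma1} pins down the cross-shaped support of each $c_k$, the orthogonality relation gives $(c_k)_{kl}=-(c_l)_{kl}$, and $v$ is assembled from the off-diagonal entries (your choice $v_{ij}=(c_i)_{ij}$ is the negative of the paper's $a_{ij}=a_{ij}^{(j)}$, and is in fact the one consistent with the convention $d_v(a)=av-va$). Note that, exactly like the paper's own proof, you obtain only $v\in M(I,R)$ rather than the $v\in M_{\infty}(I,R)$ asserted in the statement; that appears to be a misprint in the lemma, since the proof of Theorem~\ref{Th1} invokes it only with $v\in M(I,R)$, and in general one cannot do better than $M(I,R)$.
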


\begin{proof} Let $$ d\big(e_{kk}(1_R)\big) =(a_{ij}^{(k)})_{i,j\in I}, \quad k\in I, \quad a_{ij}^{(k)} \in R. $$ We have $$ d\big(e_{kk}(1_R)\big)= d\big(e_{kk}(1_R)\big) e_{kk}(1_R) + e_{kk}(1_R)d\big(e_{kk}(1_R)\big) .$$ By Lemma \ref{lemma1}, $a_{ij}^{(k)}=0$ whenever $ i \neq k, $ $ j \neq k$ or $ i=j=k.$  Choose distinct indices $p,q \in I.$ Then $e_{pp}(1_R)e_{qq}(1_R) =0$ implies $$ (a_{ij}^{(p)})_{i,j\in I} \ e_{qq}(1_R)+ e_{pp}(1_R)\  (a_{kl}^{(q)})_{k,l\in I}=0. $$ The $(p,q)$-entry of the matrix on the left hand side is $$ a_{pq}^{(p)}+ a_{pq}^{(q)}=0. $$ Let $X=(x_{ij})_{i,j\in I},$ $x_{ij} \in R.$ The $(p,q)$-entry of the matrix $[X, e_{kk}(1_R)]$ is equal to $0$ if $p\neq k , $ $q\neq k;$ is equal to $-x_{pq}$ if $k=p;$ and  to $x_{pq}$ if $k=q.$ The diagonal entries of the matrix $ [X, e_{kk}(1_R)] $ are equal to zero. 

Define $a_{ij} =a_{ij}^{(j)}$ for $i \neq j;$ $a_{ii}=0;$ and $v=(a_{ij})_{i,j\in I}.$ By the above, $$ d\big(e_{kk}(1_R)\big) =(a_{ij}^{(k)})_{i,j\in I}= \big[v, e_{kk}(1_R)\big] $$ for an arbitrary index $k\in I.$

The $j$-th column of the matrix $v$ is equal to the $j$-th column of the matrix $ d\big(e_{kk}(1_R)\big) \in M(I,R).$ Hence, only finitely many entries in the $j$-th column are different from zero, hence $v\in M(I,R).$ This completes the proof of the lemma.
\end{proof}
                              
\begin{lemma}\label{lemma3} Let $v\in M(I,R).$ Then 
\begin{enumerate}
	\item[\text{(a)}]  $\big[ v, M_{\infty}(I, R)\big]\subseteq M_{rcf}(I,R)$ if and only if $v\in M_{rcf}(I,R);$
	\item[\text{(b)}]  if   $v\in M_{rcf}(I,R),$ then in fact $\big[ v, M_{\infty}(I, R) \big] \subseteq M_{\infty}(I,R).$
\end{enumerate}		 \end{lemma}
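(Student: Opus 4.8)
The plan is to test the commutator $[v, e_{ij}(1_R)]$ against the generators $e_{ij}(1_R)$ of $M_\infty(I,R)$ and read off which rows and columns of $v$ must be finitely supported. Write $v = (v_{kl})_{k,l \in I}$. A direct computation gives $v\, e_{ij}(1_R) = \sum_{k} e_{kj}(v_{ki})$, i.e.\ the matrix whose $j$-th column is the $i$-th column of $v$ and whose other columns vanish; similarly $e_{ij}(1_R)\, v = \sum_{l} e_{il}(v_{jl})$, the matrix whose $i$-th row is the $j$-th row of $v$ and whose other rows vanish. Hence $[v, e_{ij}(1_R)]$ has its $j$-th column equal to the $i$-th column of $v$ (possibly corrected in the $i$-th entry), its $i$-th row equal to minus the $j$-th row of $v$ (with a correction at entry $j$), and all other entries zero.

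For part (a), the ``if'' direction: suppose $v \in M_{rcf}(I,R)$ and let $X = (x_{kl}) \in M_\infty(I,R)$, so $X$ has only finitely many nonzero entries, say supported on rows from a finite set $P$ and columns from a finite set $Q$. Then $vX$ is supported on columns in $Q$, and each such column is a finite $R$-combination of columns of $v$, hence is finitely supported since $v$ has finite columns; moreover each row of $vX$ is supported within the finitely many rows of $v$ that meet the finitely many nonzero columns of $X$ — more carefully, the nonzero rows of $vX$ are among the rows $k$ for which some $v_{kl}\neq 0$ with $l \in P$, and since $v$ has finite rows there are only finitely many such $k$ for each $l$, hence finitely many total. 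So $vX \in M_{rcf}(I,R)$, and symmetrically $Xv \in M_{rcf}(I,R)$; thus $[v,X] \in M_{rcf}(I,R)$. For the ``only if'' direction, I argue contrapositively: if $v \notin M_{rcf}(I,R)$ then $v$ has some row, say the $j$-th, with infinitely many nonzero entries. Taking $i$ arbitrary, $[v, e_{ij}(1_R)] \in M_\infty(I,R) \subseteq$ domain, and its $i$-th row is (up to the single entry at position $j$) the $j$-th row of $v$, hence has infinitely many nonzero entries, so $[v, e_{ij}(1_R)] \notin M_{rcf}(I,R)$. (Note $v$ already has finite columns since $v \in M(I,R)$, so the only obstruction is an infinite row.)

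Part (b) is then immediate from the ``if'' argument above: when $v \in M_{rcf}(I,R)$ and $X \in M_\infty(I,R)$ with nonzero entries confined to a finite set of rows and columns, the products $vX$ and $Xv$ are each supported on a finite set of rows \emph{and} a finite set of columns (the columns of $vX$ lie among the finitely many nonzero columns of $X$; the nonzero rows of $vX$ lie among the finitely many rows of $v$ having support in the nonzero rows of $X$, which is finite because $v$ has finite columns), hence $[v,X] \in M_\infty(I,R)$. I do not anticipate a genuine obstacle here; the only point requiring care is bookkeeping the support of a product $vX$ when $v$ has finite rows and columns but is otherwise arbitrary — one must use \emph{both} finiteness conditions on $v$, one to control the columns of the product and one to control its rows.
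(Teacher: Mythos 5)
Your proof is correct and takes essentially the same route as the paper: the ``only if'' direction of (a) comes from testing $[v,e_{ij}(1_R)]$ against matrix units and reading off a row of $v$, while the ``if'' direction together with (b) amounts to the fact that $M_{\infty}(I,R)$ is a two-sided ideal of $M_{rcf}(I,R)$, which the paper simply cites and you verify by direct support bookkeeping. (One harmless slip: in the ``if'' part of (a), the finiteness of $\{k : v_{kl}\neq 0\}$ for fixed $l$ follows from column-finiteness of $v$, not row-finiteness as you write there --- you attribute it correctly in (b), and both properties hold for $v\in M_{rcf}(I,R)$ anyway, so nothing breaks.)
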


\begin{proof} The assertion (b) immediately follows from the fact that $M_{\infty}(I, R)$ is a two-sided ideal in the ring $M_{rcf}(I, R).$ 
	
Let $$ v=(a_{ij})_{i,j\in I}\in M(I,R), \quad \big[ v,e_{kk}(1_R) \big] \in  M_{rcf}(I,R) \quad \text{for any} \quad k\in I.$$ All entries in the $k$-th row of the matrix $v,$ except for the diagonal one, are negatives of the corresponding entries in the $k$-th row of the matrix $[v,e_{kk}(1_R)].$ Since $\big[ v, e_{kk}(1_R) \big]  \in M_{rcf}(I,R),$ it follows that only finitely many entries of the $k$-th row of the matrix $v$ are different from zero. Hence,   $v\in M_{rcf}(I,R).$ This completes the proof of the lemma.
\end{proof}

Now, we are ready to prove Theorem \ref{Th1}. 
\begin{proof} Let $d$ be a derivation of the ring $M_{\infty}(I,R).$ By Lemma \ref{lemma2}, there exists a matrix $v\in M(I,R)$ such that $d\,' =d-d_v$ maps all matrix units $e_{kk}(1_R)$ to zero. This implies that $$ d\,'\big(e_{ij}(R)\big)\subseteq e_{ij}(R) $$ for all indices $i,  j \in I.$
	
Define  the additive mapping $d_{ij}:R\to R$ via $$ d\,' \big( e_{ij}(a) \big) =e_{ij}\big( d_{ij}(a)\big) .$$ Clearly, $d_{ij}(1_R)=0.$ Let $a,b\in R;$ $i,  j,  k \in I.$ We have $ e_{ij}(ab)=e_{ik}(a)e_{kj}(b).$ Hence, $$ d_{ij}(ab)=d_{ik}(a) \cdot b + a \cdot d_{kj}(b). $$ Let $b=1_R.$ Then $d_{ij}(a)=d_{ik}(a).$ This implies that $d_{ij}$ does not depend on $i,$ $j.$

Define $d=d_{ij};$ $i,  j \in I.$ The mapping $d$ is a derivation of the ring $R.$ Hence, $d\,' \in \text{Der}(R).$ All derivations from $\text{Der}(R)$ map $M_{\infty}(I,R)$ into itself. Therefore, $$ d_v\big( M_{\infty}(I,R) \big) \subseteq M_{\infty}(I,R).$$ By Lemma \ref{lemma3}(a), it implies that $v\in M_{rcf}(I,R)$ and completes the proof of the first part of the theorem.

Let $d$ be a derivation of the ring $M(I,R).$ Arguing as above, we find a matrix $v\in M(I,R)$ such that $u=d- d_v \in \text{Der}(R).$ If $d$ is a derivation of the ring $ M_{rcf}(I,R),$ then the mapping $d_v$ maps $M_{\infty}(I,R)$ to $M_{rcf}(I,R).$ By Lemma \ref{lemma3}(a), $v\in M_{rcf}(I,R).$ This completes the proof of the theorem.
\end{proof}

\end{document}